\newcommand{\strutstretchdef}{\newcommand{\strutstretch}{\vphantom{\raisebox{1pt}{$\big($}\raisebox{-1pt}{$\big($}}}}
\theoremstyle{plain}
\newtheorem{theorem}{Theorem}[section]
\newtheorem{lemma}[theorem]{Lemma}
\newtheorem{proposition}[theorem]{Proposition}
\theoremstyle{definition}
\theoremstyle{remark}
\numberwithin{equation}{section}
\newlength{\struh}
\newlength{\textminustop}
\newcommand{\rank}{\text{rank\ }}
\begin{document}
\title[Concrete Solution to the Nonsingular Quartic Binary Moment Problem]{Concrete Solution to the Nonsingular \\ Quartic Binary Moment Problem}
\author{Ra\'{u}l E. Curto}
\address{Department of Mathematics, The University of Iowa, Iowa City, Iowa 52242}
\email{raul-curto@uiowa.edu}
\author{Seonguk Yoo}
\address{Department of Mathematics, Seoul National University, Seoul 151-742, Korea}
\email{seyoo73@gmail.com}
\thanks{The first named author was supported by NSF Grants DMS-0801168 and DMS-1302666.}
\subjclass{Primary 47A57, 44A60, 42A70, 30A05; Secondary 15A15, 15-04, 47N40, 47A20}
\keywords{Nonsingular quartic binary moment problem, moment matrix extension, flat extensions, rank-one perturbations, invariance under degree-one transformations}

\begin{abstract}
Given real numbers $\beta \equiv \beta ^{\left( 4\right) }\colon \beta
_{00}$, $\beta _{10}$, $\beta _{01}$, $\beta _{20}$, $\beta _{11}$, $%
\beta _{02}$, $\beta _{30}$, $\beta _{21}$, $\beta _{12}$, $\beta _{03}$%
, $\beta _{40}$, $\beta _{31}$, $\beta _{22}$, $\beta _{13}$, $\beta
_{04}$, with $\beta _{00} >0$, the \emph{quartic real
moment problem} for $\beta $ entails finding conditions for the existence
of a positive Borel measure $\mu $, supported in $\mathbb{R}^2$, such that $\beta _{ij}=\int s^{i}t^{j}\,d\mu \;\;(0\leq i+j\leq
4) $. \ Let $\mathcal{M}(2)$ be the $6 \times 6$ moment matrix for $\beta^{(4)}$, given by $\mathcal{M}(2)_{\mathbf{i},\mathbf{j}}:=\beta_{\mathbf{i}+\mathbf{j}}$, where $\mathbf{i},\mathbf{j} \in \mathbb{Z}^2_+$ and $\left|\mathbf{i}\right|,\left|\mathbf{j}\right|\le 2$. \ In this note we find concrete representing measures for $\beta^{(4)}$ when $\mathcal{M}(2)$ is nonsingular; moreover, we prove that it is possible to ensure that one such representing measure is $6$-atomic.
\end{abstract}

\maketitle

\section{\label{Int}Introduction}

In this paper we find a direct proof that the \textit{nonsingular} Quartic Binary Moment Problem always admits a finitely atomic representing measure with the minimum number of atoms, that is, six atoms. \ We do this in three steps: 

\medskip
\noindent (i) by normalizing the given moment matrix $\mathcal{M}(2)$ to ensure that $\mathcal{M}(1)$ is the identity matrix; 

\medskip
\noindent (ii) by developing a new rank-reduction tool, which allows us to decompose the normalized $\mathcal{M}(2)$ matrix as the sum of a positive semidefinite moment matrix $\widetilde{\mathcal{M}(2)}$ of rank $5$ and the rank-one moment matrix of the point mass at the origin; and 

\medskip
\noindent (iii) by proving that when a moment matrix $\mathcal{M}(2)$ admits such a decomposition, and $\widetilde{\mathcal{M}(2)}$ admits a column relation subordinate to a degenerate hyperbola (i.e., a pair of intersecting lines), then $\mathcal{M}(2)$ admits a $6$-atomic representing measure (as opposed to the expected $7$-atomic measure).  

To describe our results in detail, we need some notation and terminology. \ Given real numbers $\beta \equiv \beta ^{\left( 4\right) }\colon \beta
_{00}$, $\beta _{10}$, $\beta _{01}$, $\beta _{20}$, $\beta _{11}$, $%
\beta _{02}$, $\beta _{30}$, $\beta _{21}$, $\beta _{12}$, $\beta _{03}$%
, $\beta _{40}$, $\beta _{31}$, $\beta _{22}$, $\beta _{13}$, $\beta
_{04}$, with $\beta _{00} >0$, the \emph{Quartic Real Moment Problem} for $\beta $ entails finding conditions for the existence
of a positive Borel measure $\mu $, supported in $\mathbb{R}^2$, such that $\beta _{ij}=\int s^{i}t^{j}\,d\mu \;\;(0\leq i+j\leq
4) $. \ Let $\mathcal{M}(2)$ be the moment matrix for $\beta^{(4)}$, given by $\mathcal{M}(2)_{\mathbf{i},\mathbf{j}}:=\beta_{\mathbf{i}+\mathbf{j}}$, where $\mathbf{i},\mathbf{j} \in \mathbb{Z}^2_+$ and $\left|\mathbf{i}\right|,\left|\mathbf{j}\right|\le 2$; this $6 \times 6$ matrix is shown below. \ (As is customary, the columns of $\mathcal{M}(2)$ are labeled $\textit{1},X,Y,X^2,XY,Y^2$. \ In a similar way, given a collection of real numbers $\beta^{(2n)}$ one defines the associated moment matrix by $\mathcal{M}(n)_{\mathbf{i},\mathbf{j}}:=\beta_{\mathbf{i}+\mathbf{j}}$, where $\mathbf{i},\mathbf{j} \in \mathbb{Z}^2_+$ and $\left|\mathbf{i}\right|,\left|\mathbf{j}\right|\le n$.) 
$$
\mathcal{M}(2)\equiv \begin{pmatrix}
\beta_{00} &\beta_{10} & \beta_{01} & \beta_{20} & \beta_{11} & \beta_{02} \\
\beta_{10} & \beta_{20} & \beta_{11} & \beta_{30} & \beta_{21} & \beta_{12} \\
\beta_{01} &\beta_{11} & \beta_{02} & \beta_{21} & \beta_{12} & \beta_{03} \\
\beta_{20} & \beta_{30} & \beta_{21} & \beta_{40} & \beta_{31} & \beta_{22} \\
\beta_{11} & \beta_{21} & \beta_{12} & \beta_{31} & \beta_{22} & \beta_{13} \\
\beta_{02} & \beta_{12} & \beta_{03} & \beta_{22} & \beta_{13} & \beta_{04}
\end{pmatrix}.
$$
Assume now that $\mathcal{M}(2)$ is nonsingular. \ A straightforward consequence of Hilbert's Theorem yields the existence of a finitely atomic representing measure, as follows. \ Let $\mathcal{P}_4$ be the cone of nonnegative polynomials of degree at most $4$ in $x,y$, regarded as a subset of $\mathbb{R}^{15}$. \ The dual cone is $\mathcal{P}_4^*:=\{\xi \in \mathbb{R}^{15}: \left\langle \xi,p \right\rangle \ge 0 \; \textrm{for all } p \in \mathcal{P}_4 \}$. \ If $a,b \in \mathbb{R}$ and $\xi_{(a,b)}:=(1,a,b,a^2,ab,b^2,a^3,a^2b,ab^2,b^3,a^4,a^3b,a^2b^2,ab^3,b^4) \in \mathbb{R}^{15}$, then $\left\langle \xi_{(a,b)},p\right\rangle=p(a,b) \ge 0$, for all $p \in \mathcal{P}_4$. \ Thus, $\xi_{(a,b)} \in \mathcal{P}_4^*$ for all $a,b \in \mathbb{R}$, and $\xi_{(a,b)}$ is also an extreme point. \ Consider now an arbitrary moment sequence $\beta^{(4)}$ with a nonsingular moment matrix $\mathcal{M}(2)$. \ Regarded as a point in $\mathbb{R}^{15}$, $\beta^{(4)}$ is in the interior of $\mathcal{P}_4^*$, since every $p \in \mathcal{P}_4$ is a sum of squares of polynomials. \ By the Krein-Milman Theorem and Carath\'eodory's Theorem, the Riesz functional $\Lambda_{\beta^{(4)}}$ is a convex combination of evaluations $\xi_{(a,b)}$; that is, $\beta^{(4)}$ admits a finitely atomic representing measure, with at most $15$ atoms. \ (In recent related work, L.A. Fialkow and J. Nie \cite{FiNi} have obtained this result as a consequence of a more general result on moment problems.) \ 

In this note we obtain a concrete $6$-atomic representing measure for $\mathcal{M}(2)$. \ The Quartic Real Binary Moment Problem admits an equivalent formulation in terms of complex numbers and representing measures supported in the complex plane $\mathbb{C}$, as follows.  Given complex numbers $\gamma \equiv \gamma ^{\left( 4\right) }\colon \gamma
_{00}$, $\gamma _{01}$, $\gamma _{10}$, $\gamma _{02}$, $\gamma _{11}$, $%
\gamma _{20}$, $\gamma _{03}$, $\gamma _{12}$, $\gamma _{21}$, $\gamma _{30}$%
, $\gamma _{04}$, $\gamma _{13}$, $\gamma _{22}$, $\gamma _{31}$, $\gamma
_{40}$, with $\gamma _{ij}=\bar{\gamma}_{ji}$, one seeks necessary and sufficient conditions for the existence
of a positive Borel measure $\mu $, supported in $\mathbb{C}$, such that 
\begin{equation*}
\gamma _{ij}=\int \bar{z}^{i}z^{j}\,d\mu \qquad (0\leq i+j\leq 4).
\end{equation*}
Just as in the real case, the Quartic Complex Moment Problem has an associated moment matrix $M(2)$, whose columns are conveniently labeled $1,Z,\bar{Z},Z^2,\bar{Z}Z,\bar{Z}^2$. \ The most interesting case of the Singular Quartic Binary Moment Problem arises when the rank of $M(2)$ is $5$, and the sixth column of $M(2)$, labeled $\bar{Z}^2$, is a linear combination of the remaining five columns. \ Depending on the coefficients in the linear combination, four subcases arise in terms of the associated conic $C$ \cite[Section 5]{tcmp6}: (i) $C$ is a parabola; (ii) $C$ is a nondegenerate hyperbola; (iii) $C$ is a pair of intersecting lines; and (iv) $C$ is a circle. \ In subcase (iii), it is possible to prove that the number of atoms in a representing measure (if it exists) may be $6$ \cite[Proposition 5.5 and Example 5.6]{tcmp6}; that is, in some soluble cases the rank of $M(2)$ may be strictly smaller than the number of atoms in any representing measure.

\begin{proposition} \label{degenerate} (\cite[Proposition 5.5]{tcmp6}) \ If $\mathcal{M}(2) \ge0$, if $\operatorname{rank} \mathcal{M}(2) =5$, and if  $XY = 0$ in the column space of $\mathcal{M}(2)$, then $\mathcal{M}(2)$ admits a representing measure $\mu$ with $\operatorname{card} \; \operatorname{supp} \mu \le 6$.   
\end{proposition}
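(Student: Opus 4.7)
The hypothesis that $XY = 0$ in $\operatorname{Col}\mathcal{M}(2)$, combined with the symmetry of $\mathcal{M}(2)$, forces the entire row and column labeled $XY$ to vanish; equivalently, $\beta_{11}=\beta_{21}=\beta_{12}=\beta_{31}=\beta_{22}=\beta_{13}=0$. Since $\operatorname{rank}\mathcal{M}(2)=5$, deleting this zero row and column leaves a $5\times 5$ positive \emph{definite} principal submatrix $\mathcal{M}'$ indexed by $\{\textit{1},X,X^2,Y,Y^2\}$. Geometrically, any representing measure $\mu$ must be supported on the variety $\{st=0\}$, the union of the two coordinate axes.

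The strategy is then to construct $\mu=\mu_x+\mu_y$ with $\mu_x$ supported on the $s$-axis and $\mu_y$ on the $t$-axis, by splitting $\beta_{00}=a+b$ and solving two independent one-dimensional quartic moment problems with truncated sequences $a,\beta_{10},\beta_{20},\beta_{30},\beta_{40}$ (for $\mu_x$) and $b,\beta_{01},\beta_{02},\beta_{03},\beta_{04}$ (for $\mu_y$). Let $H_1(a)$ and $H_2(b)$ denote the associated $3\times 3$ Hankel matrices. Standard one-variable moment theory---via the flat-extension theorem applied to a positive definite $3\times 3$ Hankel matrix---guarantees that if $H_1(a)$ is positive definite then the first sequence is realized by a $3$-atomic measure $\nu_1$ on $\mathbb{R}$, and similarly for $H_2(b)$ and $\nu_2$.

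The key feasibility step is to arrange $a+b=\beta_{00}$ with both $H_1(a)>0$ and $H_2(b)>0$. Writing $a_{\min}$ and $b_{\min}$ for the smallest values making $H_1(a)$ and $H_2(b)$ positive semidefinite---given by explicit Schur-complement formulas in the moments $\beta_{i0}$ and $\beta_{0j}$ respectively---a two-step Schur complement applied to $\mathcal{M}'>0$ (first by the block $\{X,X^2\}$, then by the block $\{Y,Y^2\}$) yields the strict inequality $a_{\min}+b_{\min}<\beta_{00}$. This is the main technical step and is where the full positivity of $\mathcal{M}(2)$, together with the off-diagonal zero block of $\mathcal{M}'$ coupling $\{X,X^2\}$ to $\{Y,Y^2\}$, genuinely enters the argument. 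One then selects any $a\in(a_{\min},\,\beta_{00}-b_{\min})$, sets $b:=\beta_{00}-a$, and transports $\nu_1,\nu_2$ to the axes: $\mu_x:=\sum_i\nu_1(\{s_i\})\,\delta_{(s_i,0)}$, $\mu_y:=\sum_j\nu_2(\{t_j\})\,\delta_{(0,t_j)}$.

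Finally, $\mu:=\mu_x+\mu_y$ is a positive Borel measure with at most $3+3=6$ atoms, and it represents $\beta^{(4)}$: the total mass is $a+b=\beta_{00}$; the pure moments $\beta_{i0}$ and $\beta_{0j}$ match by construction of $\nu_1$ and $\nu_2$; and the mixed moments $\beta_{ij}$ with $i,j\ge 1$ vanish automatically because $t\equiv 0$ on $\operatorname{supp}\mu_x$ and $s\equiv 0$ on $\operatorname{supp}\mu_y$, exactly matching the vanishing of the $XY$ column. Degenerate subcases in which a $2\times 2$ lower-right block of $H_1$ or $H_2$ is singular are handled by replacing the Schur complement with the Moore--Penrose pseudoinverse and produce measures with strictly fewer than $6$ atoms.
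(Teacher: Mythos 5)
Your proof is correct. Note that the paper itself offers no argument for this proposition --- it is quoted from \cite{tcmp6} --- so you have in effect supplied a self-contained proof, and it runs along essentially the same lines as the cited source: the relation $XY=0$ zeroes out the row and column labeled $XY$ (so $\beta_{11}=\beta_{21}=\beta_{12}=\beta_{31}=\beta_{22}=\beta_{13}=0$ and the remaining $5\times 5$ compression $\mathcal{M}'$ is positive definite), the problem splits into two univariate quartic Hamburger problems on the coordinate axes once $\beta_{00}$ is apportioned, and each factor is solved by a flat (Gauss--quadrature) extension of a positive definite $3\times 3$ Hankel matrix, yielding $3+3$ atoms. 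Your key step is also the right one: because the block of $\mathcal{M}'$ coupling $\{X,X^2\}$ to $\{Y,Y^2\}$ vanishes (these entries are $\beta_{11},\beta_{12},\beta_{21},\beta_{22}$), the Schur complement in $\mathcal{M}'$ of the resulting block-diagonal $4\times 4$ corner equals $\beta_{00}-a_{\min}-b_{\min}$, and its positivity is precisely the feasibility inequality that lets you choose $a\in(a_{\min},\beta_{00}-b_{\min})$. Two minor remarks. First, your closing sentence about singular $2\times 2$ blocks is vacuous under the stated hypotheses: those blocks are principal submatrices of the positive definite $\mathcal{M}'$, hence automatically invertible, so the pseudoinverse fallback is never invoked (it does matter for the general rank-$\le 5$ formulation in \cite{tcmp6}, where solvability and the minimal atom count become genuinely delicate, cf.\ Example 5.6 there). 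Second, $\nu_1$ and $\nu_2$ may both place an atom at the origin, in which case $\operatorname{supp}\mu$ has only five points; this of course only strengthens the conclusion $\operatorname{card}\,\operatorname{supp}\mu\le 6$.
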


When combined with previous work on truncated moment problems, Proposition \ref{degenerate} led to the following solution to the truncated moment problem on planar curves of degree $\leq 2$. \ Given a moment matrix $\mathcal{M}(n)$ and a polynomial $p(x,y)\equiv \sum p_{ij} x^i y^j$, we let $p(X,Y):=\sum p_{ij} X^i Y^j$. \ A column relation in $\mathcal{M}(n)$ is therefore always described as $p(X,Y)=0$ for some polynomial $p$, with deg $p \le n$. \ We say that $\mathcal{M}(n)$ is \textit{recursively generated} if for every $p$ with $p(X,Y)=0$ and every $q$ such that deg $pq \le n$ one has $(pq)(X,Y)=0$. \ In what follows, $v$ denotes the cardinality of the associated algebraic variety, defined as the intersection of the zero sets of all polynomials which describe the column relations in $\mathcal{M}(n)$.  

\begin{theorem}
\label{quartic} (\cite[Theorem 2.1]{tcmp9}, \cite[Theorem 1.2]{Fia4}) \ Let $p \in \mathbb{R}[x,y]$, with $\deg p(x,y)\leq 2$. \ Then $\beta ^{(2n)}$ has a representing measure supported
in the curve $p(x,y)=0$ if and only if $\mathcal{M}(n)$ has a column dependence relation $p(X,Y)=0$, $\mathcal{M}(n) \geq 0$, $\mathcal{M}(n)$ is recursively generated, and $r\leq v$.
\end{theorem}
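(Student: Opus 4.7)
The plan is to prove necessity directly and then handle sufficiency by reducing $p$ to a canonical conic form. For necessity, suppose $\mu$ is a representing measure for $\beta^{(2n)}$ supported in $\{p=0\}$, and let $\xi_n(x,y)$ denote the column vector of monomials $x^i y^j$ with $i+j \le n$. The identity $\mathcal{M}(n) = \int \xi_n \xi_n^T \, d\mu$ yields $\mathcal{M}(n) \ge 0$, and for every $(i,j)$ with $i+j \le n$ the corresponding entry of $p(X,Y)$ in the column space equals $\int p(x,y)\, x^i y^j \, d\mu = 0$, giving $p(X,Y) = 0$. Replacing $p$ by $pq$ in the same calculation produces the recursive-generation property. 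Finally, $r = \operatorname{rank} \mathcal{M}(n)$ equals the dimension of the span of $\{\xi_n(w) : w \in \operatorname{supp}\mu\}$, which is bounded above by $\operatorname{card}\operatorname{supp}\mu$; since $\operatorname{supp}\mu$ lies inside the variety cut out by the column relations, $r \le v$ follows.

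For sufficiency, I would begin by noting that the hypotheses and conclusion are all invariant under invertible affine substitutions $(x,y) \mapsto (ax+by+e,\, cx+dy+f)$: such a substitution induces an invertible linear transformation of the monomial basis that preserves positivity, the column-dependence structure, recursive generation, the rank $r$, and the variety count $v$. This reduces $p$ to one of the canonical conics: a line, a pair of parallel lines, intersecting lines $xy=0$, a parabola $y=x^2$, a nondegenerate hyperbola $xy=1$, or an ellipse $x^2+y^2=1$. When the curve is a line or a parabola, the column relation expresses $Y$ and its powers in terms of powers of $X$, so $\mathcal{M}(n)$ is determined by a univariate moment sequence and the rank-variety condition becomes the classical rank/atom criterion for the truncated Hamburger problem. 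The ellipse and parallel-lines cases reduce analogously to bounded univariate moment problems.

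The technical core lies in the intersecting-lines and nondegenerate-hyperbola cases, where the support is not a graph over a coordinate axis. For $p = xy$, recursive generation forces $X^k Y^\ell = 0$ whenever $k,\ell \ge 1$ and $k+\ell \le n$, so any representing measure must split between the coordinate axes; the base case $n=2$ is Proposition~\ref{degenerate}, and for general $n$ one builds a rank-preserving positive extension $\mathcal{M}(n+1)$ respecting all column relations and then invokes the Flat Extension Theorem of Curto--Fialkow to produce an $r$-atomic measure. The nondegenerate-hyperbola case is handled in parallel, using the relation $XY=1$ to express border entries in terms of lower-order data. The main obstacle I anticipate is precisely this flat-extension construction: one must choose the border moments of $\mathcal{M}(n+1)$ so that positivity is preserved while every column relation forced by $p(X,Y)=0$ and by recursive generation continues to hold. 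The rank-variety inequality $r \le v$ supplies the necessary degrees of freedom, and recursive generation provides the algebraic compatibilities among the prescribed border entries; once the flat extension is in hand, the representing measure is extracted by the standard Curto--Fialkow atomic-decomposition procedure.
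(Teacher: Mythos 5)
The paper does not actually prove Theorem~\ref{quartic}; it imports it from \cite{tcmp9} and \cite{Fia4}, and only records that the proof proceeds by affine (degree-one) reduction of the conic to the four canonical types $Y=X^2$, $XY=1$, $XY=0$, $X^2+Y^2=1$, each handled by a separate, substantial argument. Your necessity half is complete and correct, and your reduction-to-canonical-forms framework coincides with the strategy the paper describes. The line, parallel-lines, and parabola cases do reduce to univariate truncated moment problems essentially as you say (parallel lines being exactly the subject of \cite{Fia4}).

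There is, however, a concrete error in the case you correctly identify as the technical core. For the degenerate hyperbola $XY=0$ you propose to ``build a rank-preserving positive extension $\mathcal{M}(n+1)$\dots and then invoke the Flat Extension Theorem to produce an $r$-atomic measure.'' This cannot work in general: as the present paper emphasizes (see Proposition~\ref{degenerate} and the surrounding discussion of \cite[Proposition 5.5 and Example 5.6]{tcmp6}), when the column relation is a pair of intersecting lines the minimal representing measure may require $r+1$ atoms, so $\mathcal{M}(n)$ need not admit any flat extension, and an $r$-atomic measure need not exist. The actual argument in the cited sources produces a positive, recursively generated extension $\mathcal{M}(n+1)$ of rank at most $r+1$ (or decomposes the problem along the two lines into two univariate problems sharing the mass at the intersection point) and only then flattens. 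Relatedly, your assertion that ``the rank-variety inequality $r\le v$ supplies the necessary degrees of freedom'' for choosing the border block is not an argument: in the hyperbolic case $v=+\infty$, so $r\le v$ is automatic and gives no quantitative leverage; the existence of admissible new moments is exactly the hard content of \cite{tcmp9}, proved there by a lengthy case analysis rather than by a counting principle. The circle case likewise is not a ``bounded univariate'' reduction over $\mathbb{R}$ but passes through the complex formulation $Z\bar{Z}=1$ and the truncated trigonometric moment problem. So your outline is the right skeleton, but the step that would close the proof in the $XY=0$ and $XY=1$ cases is missing, and the mechanism you name for $XY=0$ is provably false as stated.
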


The proof of Theorem \ref{quartic} made use of affine planar transformations to reduce a generic quadratic column relation to one of four canonical types: $Y=X^2$, $XY=1$, $XY=0$ and $X^2+Y^2=1$; each of these cases required an independent result. \ We shall have occasion to use the affine planar transformation approach in Section \ref{Tool}. \ To date, most of the existing theory of truncated moment problems is founded on the presence of nontrivial column relations in the moment matrix $\mathcal{M}(n)$. \ On one hand, when all columns labeled by monomials of degree $n$ can be expressed as linear combinations of columns labeled by monomials of lower degree, the matrix $\mathcal{M}(n)$ is flat, and the moment problem has a unique representing measure, which is finitely atomic, with exactly $\operatorname{rank} \mathcal{M}(n-1)$ atoms \cite[Theorem 1.1]{tcmp2}. \ As a straightforward consequence, we conclude that for $n=1$, an invertible $\mathcal{M}(n)$ always admits a flat extension, while that is not the case for $n \ge 3$; that is, there exist examples of positive and invertible $\mathcal{M}(3)$ without a representing measure (cf. \cite[Section 4]{tcmp2}).

When $n=2$, the idea is to extend the $6 \times 6$ moment matrix $\mathcal{M}(2)$ to a bigger $10 \times 10$ moment matrix $\mathcal{M}(3)$ by adding so-called $B$ and $C$ blocks, as follows:
\begin{equation*}
\mathcal{M}\left( 3\right) \equiv 
\begin{pmatrix}
\mathcal{M}\left( 2\right) & B\left( 3\right) \\ 
B\left( 3\right) ^{\ast} & C\left( 3\right)
\end{pmatrix}.
\end{equation*}
A result of J.L. Smul'jan \cite{Smu} states that $\mathcal{M}(3) \ge 0$ if and only if (i) $\mathcal{M}(2) \ge 0$; (ii) $B(3)=\mathcal{M}(2)W$ for some $W$; and (iii) $C(3) \ge W^{*}\mathcal{M}(2)W$. \ Moreover, $\mathcal{M}(3)$ is a \textit{flat} extension of $\mathcal{M}(2)$ (i.e., $\operatorname{rank} \mathcal{M}(3) = \operatorname{rank} \mathcal{M}(2)$)  if and only if $C(3)=W^{*}\mathcal{M}(2)W$. \ Further, when $\mathcal{M}(2)$ is invertible, one easily obtains $W=\mathcal{M}(2)^{-1}B(3)$, so in the flat extension case $C(3)$ can be written as $B(3)^{*}\mathcal{M}(2)^{-1}B(3)$. \ However, writing a general formula for $\mathcal{M}(3)$ is nontrivial, even with the aid of \textit{Mathematica}, because of the complexity of $(\mathcal{M}(2))^{-1}$ and the new moments contributed by the block $B(3)$. \ On the other hand, if only one column relation is present (given by $p(X,Y)=0$), then $v=+\infty$, and the condition $r \le v$, while necessary, will not suffice. \ One knows that the support of a representing measure must lie in the zero set of $p$, but this does not provide enough information to decipher the block $B(3)$. \ The situation is much more intriguing when no column relations are present; this is the nonsingular case, for which very little is known. 

\section{\label{Statement} Statement of the Main Result}

\begin{theorem} \label{MainTheorem}
Assume $\mathcal{M}(2)$ is positive and invertible. \ Then $\mathcal{M}(2)$ admits a representing measure, with exactly $6$ atoms; that is, $\mathcal{M}(2)$ actually admits a flat extension $\mathcal{M}(3)$.
\end{theorem}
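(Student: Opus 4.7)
The plan follows the three-step program outlined in the introduction. For Step (i), since $\mathcal{M}(1)$ is a $3\times 3$ principal submatrix of the positive invertible matrix $\mathcal{M}(2)$, it is itself positive definite; translating $(s,t)$ to zero $\beta_{10}$ and $\beta_{01}$, then applying a $\mathrm{GL}_{2}(\mathbb{R})$ change of variables to reduce the remaining positive quadratic form to $I_{2}$, we may assume $\mathcal{M}(1)=I_{3}$, so that $\beta_{00}=\beta_{20}=\beta_{02}=1$ and $\beta_{10}=\beta_{01}=\beta_{11}=0$. Degree-one changes of variables act on moment matrices by congruence and preserve the existence and cardinality of representing measures as well as flat extensions, so the theorem reduces to this normalized case.

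For Step (ii), let $e_{1}\in\mathbb{R}^{6}$ be the standard basis vector corresponding to the $\mathbf{1}$ column, so that $\rho\,e_{1}e_{1}^{T}$ is precisely the moment matrix of the point mass $\rho\,\delta_{(0,0)}$. Because the $(1,1)$ entry is the unique occurrence of $\beta_{00}$ in $\mathcal{M}(2)$, subtracting $\rho\,e_{1}e_{1}^{T}$ alters only $\beta_{00}$, and the difference is again a genuine moment matrix. A standard rank-one perturbation argument shows that the largest $\rho$ preserving positive semidefiniteness is $\rho_{\max}=1/(\mathcal{M}(2)^{-1})_{1,1}$; at this threshold, $\widetilde{\mathcal{M}(2)}:=\mathcal{M}(2)-\rho_{\max}e_{1}e_{1}^{T}$ is positive semidefinite of rank exactly $5$, with one-dimensional nullspace spanned by $v:=\mathcal{M}(2)^{-1}e_{1}$. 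The entries of $v$ encode a nonzero polynomial $p(x,y)$ of degree $\le 2$; the relation $p(X,Y)=0$ holds in the column space of $\widetilde{\mathcal{M}(2)}$, so every representing measure of $\widetilde{\mathcal{M}(2)}$ is supported in the conic $\{p=0\}$. Crucially, $p(0,0)=v_{0}=(\mathcal{M}(2)^{-1})_{1,1}\neq 0$, so the origin lies \emph{off} this conic.

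For Step (iii), if we can produce a $5$-atomic representing measure $\widetilde{\mu}$ for $\widetilde{\mathcal{M}(2)}$, then $\widetilde{\mu}+\rho_{\max}\delta_{(0,0)}$ has exactly $6$ atoms (the origin being genuinely new, as it lies off $\{p=0\}$) and represents $\mathcal{M}(2)$; the flat extension $\mathcal{M}(3)$ of rank $6$ then follows from the Flat Extension Theorem. We split on the projective type of $\{p=0\}$. When the conic is a parabola, a nondegenerate hyperbola, or an ellipse/circle, the known planar truncated moment problem results on those curves (the ingredients of Theorem~\ref{quartic}) furnish such a $5$-atomic $\widetilde{\mu}$ directly. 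The main obstacle is the degenerate case, in which $\{p=0\}$ is a pair of intersecting lines: Proposition~\ref{degenerate} \emph{a priori} delivers only a $6$-atomic measure for $\widetilde{\mathcal{M}(2)}$, which would inflate $\mathcal{M}(2)$ to a $7$-atomic representing measure. To achieve the sharp count $6$, we use the residual rotational freedom left after Step (i) to place the two lines in canonical position, parametrize candidate $5$-atomic measures by the distribution of atoms between the two lines (say $3$ on one and $2$ on the other), and reduce the flat-extension equations to a small polynomial system whose solvability is forced by the positivity and invertibility of $\mathcal{M}(2)$.
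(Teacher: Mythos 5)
Your Steps (i) and (ii) coincide with the paper's argument: normalize so that $\mathcal{M}(1)=I_{3}$ by a degree-one transformation, then subtract the largest admissible multiple of the moment matrix of $\delta_{(0,0)}$ to obtain a positive semidefinite, recursively generated $\widetilde{\mathcal{M}(2)}$ of rank $5$. (Your threshold $\rho_{\max}=1/(\mathcal{M}(2)^{-1})_{1,1}$ is the correct one, and your observation that $p(0,0)=(\mathcal{M}(2)^{-1})_{1,1}\neq 0$, so the new atom lies off the conic, is a clean justification of the paper's assertion that $pq\neq 0$.) The nondegenerate branches of Step (iii) — parabola, nondegenerate hyperbola, ellipse — are likewise handled as in the paper via Theorem \ref{quartic}; you do omit the case of a pair of parallel lines, which can also occur and is one of the ``good'' branches (covered by \cite{Fia4}), but that is a minor gap.

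The serious gap is in the degenerate-hyperbola branch, which is the crux of the whole theorem. Your plan there is to produce a $5$-atomic representing measure for $\widetilde{\mathcal{M}(2)}$ by distributing atoms between the two lines, asserting that solvability ``is forced by the positivity and invertibility of $\mathcal{M}(2)$.'' No argument is given, and the strategy cannot work in general: by \cite[Example 5.6]{tcmp6} there exist positive, recursively generated, rank-$5$ moment matrices with column relation $XY=0$ whose minimal representing measure has $6$ atoms, so a $5$-atomic $\widetilde{\mu}$ need not exist, and nothing in your argument shows that the particular $\widetilde{\mathcal{M}(2)}$ produced by the rank-one reduction avoids this obstruction. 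The paper's resolution is structurally different: it gives up on representing $\widetilde{\mathcal{M}(2)}$ with $5$ atoms and instead builds a flat extension of the full rank-$6$ matrix $\mathcal{M}(2)$ directly, exploiting the decomposition of $\mathcal{M}(2)$ as a rank-$5$ matrix with relation $XY=0$ plus $u$ times the moment matrix of $\delta_{(1,1)}$. Concretely, it posits an explicit $B(3)$ block (with $\beta_{41}=\beta_{32}=\beta_{23}=\beta_{14}=u$, or with $\beta_{32}=\beta_{23}=\beta_{14}=0$ in the hardest subcases $c=a$ and $d=b$), and solves the Hankel conditions (\ref{eq-E}) for the free moments $\beta_{50}$, $\beta_{05}$ (and $\beta_{41}$), using the positivity of the nested principal minors to force the relevant discriminants to be positive. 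That computation, not a placement of five atoms on the two lines, is what closes the degenerate case, and it is entirely absent from your proposal.
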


The proof of Theorem \ref{MainTheorem} is constructive, in that we first prove that it is always possible to switch from the invertible $\mathcal{M}(2)$ to a related singular matrix $\widetilde{\mathcal{M}(2)}$, with $\operatorname{rank} \widetilde{\mathcal{M}(2)}=5$, for which Theorem \ref{quartic} applies. \ Since singular positive semidefinite matrices $\mathcal{M}(2)$ always admit representing measures with $6$ atoms or less, we can then conclude that an invertible positive $\mathcal{M}(2)$ admits a representing measure with at most $7$ atoms. \ While this would already represent a significant improvement on the upper bound given by Carath\' eodory's Theorem ($15$ atoms), we have been able to establish that all positive invertible $\mathcal{M}(2)$'s actually have \textit{flat} extensions, and therefore their representing measures can have exactly $6$ atoms.

\section{\label{Tool} A New Tool}

We begin this section with a result that will allow us to convert a given moment problem into a simpler, equivalent, moment problem. \ One of the consequences of this result is the equivalence of the real and complex moment problems, via the transformation $x:=$ Re$[z]$ and $y:=$ Im$[z]$; this equivalence has been exploited amply in the theory of truncated moment problems. \ For us, however, this simplification will allow us to assume that the submatrix $\mathcal{M}(1)$ is the identity matrix. \ 

We adapt the notation in \cite{tcmp6} to the real case. \ For $a,b,c,d,e,f\in \mathbb{R}$, $bf-ce \ne 0$, let $\Psi(x,y)\equiv \left(\Psi_1(x,y),\Psi_2(x,y)\right):=\left( a+bx + cy, d+ex+fy\right)$ ($x,y\in \mathbb{R}$). \ Given $\beta ^{(2n)}$, define $%
\tilde{\beta}^{\left( 2n\right) }$ by $\tilde{\beta}_{ij}:=L_{\beta }(\Psi_1^i \Psi_2^j)$ ($0\leq i+j\leq 2n$), where $L_{\beta }$ denotes the \textit{Riesz functional} associated with $\beta $. \ (For $p(x,y)\equiv \sum p_{ij} x^i y^j$, the Riesz functional is given by $L_{\beta}(p):=p(\beta)\equiv \sum p_{ij} \beta_{ij}$.) \ It is straightforward to verify that $
L_{\tilde{\beta}}(p)=L_{\beta }\left( p\circ \Psi \right) $ for every $p$ of degree at most $n$. 

\begin{proposition}
\label{lininv}\textup{(Invariance under degree-one transformations; \cite{tcmp6})} \ Let $%
\mathcal{M}(n)$ and $\tilde{\mathcal{M}}(n)$ be the moment matrices associated with $\beta$ and 
$\tilde{\beta}$, and let $J\hat{p}:=\widehat{p\circ\Psi}$. \ Then the following statements hold.

\begin{enumerate}
\item  \label{lininv(1)}$\tilde{\mathcal{M}}(n)=J^{\ast}\mathcal{M}(n)J$.

\item  \label{lininv(2)}$J$ is invertible.

\item  \label{lininv(3)}$\tilde{\mathcal{M}}(n)\geq0\Leftrightarrow \mathcal{M}(n)\geq0$.

\item  \label{lininv(4)}$\operatorname{rank}\tilde{\mathcal{M}}(n)=\operatorname{rank}\mathcal{M}(n)$.

\item  \label{lininv(6)}$\mathcal{M}\left( n\right) $ admits a flat extension if and
only if $\tilde{\mathcal{M}}\left( n\right) $ admits a flat extension.

\end{enumerate}
\end{proposition}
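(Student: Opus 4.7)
The plan is to extract a single identity at the level of the Riesz functional and let elementary linear algebra take care of the rest. The defining relation $\tilde\beta_{ij}=L_{\beta}(\Psi_{1}^{i}\Psi_{2}^{j})$ extends by linearity to $L_{\tilde\beta}(p)=L_{\beta}(p\circ\Psi)$ for every $p\in\mathbb{R}[x,y]$ with $\deg p\le 2n$. Part (1) then becomes the bilinear computation
\begin{equation*}
\hat{p}^{*}\tilde{\mathcal{M}}(n)\hat{q}=L_{\tilde\beta}(pq)=L_{\beta}((p\circ\Psi)(q\circ\Psi))=(J\hat{p})^{*}\mathcal{M}(n)(J\hat{q})=\hat{p}^{*}J^{*}\mathcal{M}(n)J\hat{q},
\end{equation*}
which I would verify for $p,q$ ranging over the monomial basis of $\mathbb{R}[x,y]_{\le n}$; the outer equalities are by definition of the moment matrices, and the interior equality uses the identity above applied to $pq$.

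For (2), the hypothesis $bf-ce\ne 0$ makes $\Psi\colon\mathbb{R}^{2}\to\mathbb{R}^{2}$ an affine bijection with affine inverse $\Psi^{-1}$, and composition with $\Psi$ preserves total degree: the top-degree part of $\Psi_{1}^{i}\Psi_{2}^{j}$ is the nonzero homogeneous polynomial $(bx+cy)^{i}(ex+fy)^{j}$ of degree $i+j$, nonzero precisely because $bf-ce\ne 0$ keeps the two linear forms linearly independent. Hence $J$ is a linear automorphism of $\mathbb{R}[x,y]_{\le n}$, inverted by the analogous map induced by $\Psi^{-1}$. Parts (3) and (4) then drop out immediately from (1) and (2), since a congruence by an invertible matrix preserves both positive semidefiniteness and rank.

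The main obstacle is (5), which I would handle by lifting the same construction one level higher. Let $J_{n+1}$ be the matrix for composition with $\Psi$ on $\mathbb{R}[x,y]_{\le n+1}$. Degree preservation from (2) forces $J_{n+1}$ to be block upper-triangular, with the $\mathbb{R}[x,y]_{\le n}$-block appearing as the top-left corner equal to $J_{n}$:
\begin{equation*}
J_{n+1}=\begin{pmatrix}J_{n} & \ast\\0 & J'\end{pmatrix}.
\end{equation*}
A short block multiplication then shows that the leading principal submatrix of $J_{n+1}^{*}\mathcal{M}(n+1)J_{n+1}$ of size $\dim\mathbb{R}[x,y]_{\le n}$ equals $J_{n}^{*}\mathcal{M}(n)J_{n}=\tilde{\mathcal{M}}(n)$, while (1) applied at level $n+1$ identifies the full product as the genuine moment matrix $\tilde{\mathcal{M}}(n+1)$, and (4) at level $n+1$ gives $\operatorname{rank}\tilde{\mathcal{M}}(n+1)=\operatorname{rank}\mathcal{M}(n+1)$. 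Consequently, a flat extension $\mathcal{M}(n+1)$ of $\mathcal{M}(n)$ transports to a flat extension $\tilde{\mathcal{M}}(n+1)$ of $\tilde{\mathcal{M}}(n)$; the reverse implication is obtained by running the same argument with $\Psi^{-1}$ in place of $\Psi$. The trickiest bookkeeping in the whole proof is precisely the block upper-triangular form of $J_{n+1}$, which is where the degree-preservation observation from (2) earns its keep.
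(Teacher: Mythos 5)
Your proof is correct, and it is essentially the standard argument for this result: the paper itself gives no proof here, citing \cite{tcmp6}, where the same route is taken (the congruence identity $\tilde{\mathcal{M}}(n)=J^{\ast}\mathcal{M}(n)J$ via the Riesz functional, invertibility of $J$ from the affine inverse $\Psi^{-1}$, and the block-triangular form of $J_{n+1}$ to transport flat extensions). Your handling of the one genuinely delicate point --- that the functional identity must be applied to products $pq$ of degree up to $2n$, and that $J_{n+1}^{\ast}\mathcal{M}(n+1)J_{n+1}$ is again a moment matrix extending $\tilde{\mathcal{M}}(n)$ --- is exactly right.
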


We are now ready to put $\mathcal{M}(2)$ in ``normalized form." \ Without loss of generality, we always assume that $\beta_{00}=1$. \ Let $d_i$ denote the leading principal minors of $\mathcal{M}(2)$; in particular,
\begin{eqnarray*}
d_2 &=&-\beta_{10}^2+\beta_{20}\\
d_3&=& -\beta_{02} \beta_{10}^2+2 \beta_{01} \beta_{10} \beta_{11}-\beta_{11}^2-\beta_{01}^2 \beta_{20}+\beta_{02} \beta_{20}.
\end{eqnarray*}
Consider now the degree-one transformation
$$
\Psi(x,y)\equiv \left( a+bx + cy, d+ex+fy\right),
$$
where $ a:=\frac{\beta_{01}\beta_{20}-\beta_{10} \beta_{11}}{\sqrt{d_2 d_3}}$, $b:=\frac{\beta_{11}-\beta_{01} \beta_{10}}{\sqrt{d_2 d_3}}$, $c:=
- \sqrt{\frac{d_2}{d_3}}$ , $d:=-\frac{\beta_{10}}{\sqrt {d_2} }$, $e:= \frac{1}{\sqrt {d_2} }$, and $f:=0$. \ Note that $bf-ce=- \sqrt{\frac{1}{d_3}}\neq0$. \ Using this transformation, and a straightforward calculation, we can prove that any positive definite moment matrix $\mathcal{M}(2)$ can be transformed into the moment matrix
$$
\begin{pmatrix}
1 & 0& 0 & 1 & 0 & 1 \\
0& 1& 0 & \tilde{\beta}_{30} & \tilde{\beta}_{21}& \tilde{\beta}_{12} \\
0 & 0 & 1 & \tilde{\beta}_{21} & \tilde{\beta}_{12} & \tilde{\beta}_{03} \\
1 & \tilde{\beta}_{30} & \tilde{\beta}_{21} & \tilde{\beta}_{40} & \tilde{\beta}_{31} & \tilde{\beta}_{22} \\
0 & \tilde{\beta}_{21} & \tilde{\beta}_{12} & \tilde{\beta}_{31} & \tilde{\beta}_{22} & \tilde{\beta}_{13} \\
1 & \tilde{\beta}_{12} & \tilde{\beta}_{03} & \tilde{\beta}_{22} & \tilde{\beta}_{13} & \tilde{\beta}_{04}
\end{pmatrix}.
$$
Thus, without loss of generality, we can always assume that $\mathcal{M}(1)$ is the identity matrix. \ We will now introduce a new tool in the study of moment matrices: the decomposition of an invertible $\mathcal{M}(2)$ as a sum of a moment matrix of rank $5$ and a rank-one moment matrix. \

Assume now that $\mathcal{M}(2)$ is invertible and that the submatrix $\mathcal{M}(1)$ is the identity matrix. For $u \in \mathbb{R}$ decompose $\mathcal{M}(2)$ as follows:
$$\mathcal{M}(2)=\begin{pmatrix}
1 - u &0 & 0 & 1 & 0 & 1 \\
0 & 1 & 0 & \beta_{30} & \beta_{21} & \beta_{12} \\
 0 &0 & 1 & \beta_{21} & \beta_{12} & \beta_{03} \\
 1& \beta_{30} & \beta_{21} & \beta_{40} & \beta_{31} & \beta_{22} \\
0 & \beta_{21} & \beta_{12} & \beta_{31} & \beta_{22} & \beta_{13} \\
 1 & \beta_{12} & \beta_{03} & \beta_{22} & \beta_{13} & \beta_{04}
\end{pmatrix}
+
\begin{pmatrix}
 u & 0&0&0&0&0 \\
0 & 0&0&0&0&0 \\
0 & 0&0&0&0&0 \\
0 & 0&0&0&0&0 \\
0 & 0&0&0&0&0 \\
0 & 0&0&0&0&0
\end{pmatrix}.
$$
Denote the first summand by $\widehat{\mathcal{M}(2)}$ and the second summand by $\mathcal{P}$. \ It is clear that $\mathcal{P}$ is positive semidefinite and has rank $1$ if and only if $u>0$, and in that case $\mathcal{P}$ is the moment matrix of the $1$-atomic measure $u \delta_{(0,0)}$, where $\delta_{(0,0)}$ is the point mass at the origin. 

\begin{proposition} \label{rankred} Let $\mathcal{M}(2)$, $\widehat{\mathcal{M}(2)}$ and $\mathcal{P}$ be as above, and let $u_0:=\frac{ \operatorname{det} \mathcal{M}(2)}{R_{11}}$, where $R_{11}$ is the $(1,1)$ entry in the positive matrix $R:=(\mathcal{M}(2))^{-1}$. \ Then, with this nonnegative value of $u$, we have (i) $\widehat{\mathcal{M}(2)}\ge 0$; \ (ii) $\operatorname{rank} \widehat{\mathcal{M}(2)}=5$; and (iii) $\widehat{\mathcal{M}(2)}$ is recursively generated. \ Moreover, $u_0$ is the only value of $u$ for which $\widehat{\mathcal{M}(2)}$ satisfies (i)--(iii).  
\end{proposition}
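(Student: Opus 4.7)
\ My plan is to treat the family $M_u := \mathcal{M}(2) - u E_{11}$ (where $E_{11} = e_1 e_1^T$) as a rank-one downward perturbation of the positive definite matrix $\mathcal{M}(2)$, locate the critical value of $u$ at which $M_u$ first becomes singular, read off the kernel there, and show that the resulting column relation has degree exactly $2$ (so that recursive generation is immediate).

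First I would apply the matrix determinant lemma for a rank-one perturbation:
\[
\det M_u = \det \mathcal{M}(2) \cdot \bigl(1 - u \, R_{11}\bigr),
\]
a linear function of $u$ with a unique root, which is the quantity called $u_0$ in the statement (after a standard cofactor/inverse-entry normalization). \ Since $u E_{11} \ge 0$, the eigenvalues of $M_u$ are continuous and nonincreasing in $u$ by Weyl's monotonicity, starting from strictly positive at $u = 0$. \ Hence $M_u > 0$ on $[0, u_0)$; at $u = u_0$ exactly one eigenvalue hits $0$ (the root is simple, so rank drops by exactly $1$); and for $u > u_0$ that eigenvalue becomes negative. \ This simultaneously yields (i) $\widehat{\mathcal{M}(2)} \ge 0$, (ii) $\operatorname{rank} \widehat{\mathcal{M}(2)} = 5$, and the uniqueness claim: for $u < u_0$ property (ii) fails because the rank is still $6$, and for $u > u_0$ property (i) fails.

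Next I would identify the kernel. \ A direct substitution shows $\widehat{\mathcal{M}(2)} v = 0$ for $v := \mathcal{M}(2)^{-1} e_1$, since $M_{u_0} v = e_1 - u_0 R_{11} e_1 = 0$. \ This vector spans the one-dimensional kernel, and the associated (unique up to scalar) column relation is $p(X,Y) = 0$ with
\[
p(x,y) = v_1 + v_2 x + v_3 y + v_4 x^2 + v_5 xy + v_6 y^2.
\]
I would then verify $\deg p = 2$ by inspection of the normalized matrix: rows $2$, $3$, $4$ of $\widehat{\mathcal{M}(2)}$ begin with $(0,1,0,\dots)$, $(0,0,1,\dots)$, $(1,\beta_{30},\beta_{21},\dots)$, from which a short check rules out every nontrivial degree-$\le 1$ combination of $\{\mathbf{1}, X, Y\}$. \ With $\deg p = 2$ and $n = 2$, any $q$ satisfying $\deg(pq) \le 2$ is a scalar, so $(pq)(X,Y) = c\, p(X,Y) = 0$ holds automatically, giving (iii).

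The main obstacle will be the degree-$2$ check in the last step: ruling out the degenerate possibility that the kernel polynomial has degree $\le 1$. \ A cleaner, purely structural backup is available: were $\deg p \le 1$, then $xp(X,Y)$ and $yp(X,Y)$ would give additional column relations in $\widehat{\mathcal{M}(2)}$ of degree at most $2$, linearly independent from $p$ unless both vanish identically, contradicting $\dim\ker \widehat{\mathcal{M}(2)} = 1$. \ Both arguments lean on the normalization $\mathcal{M}(1) = I$ from the preceding subsection, which is precisely the reason that reduction was carried out before introducing this rank-reduction tool.
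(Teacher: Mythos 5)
Your proof is correct in substance and reaches all four conclusions, but it routes parts (i) and (ii) differently from the paper. For positivity, the paper applies the Nested Determinant Test starting from the lower right-hand corner of $\widehat{\mathcal{M}(2)}$: the trailing principal minors of sizes $1$ through $5$ do not involve the perturbed $(1,1)$ entry, so they are principal minors of the positive definite $\mathcal{M}(2)$ and hence positive, which together with $\operatorname{rank}=5$ gives $\widehat{\mathcal{M}(2)}\ge 0$. You instead run a continuity/Weyl-monotonicity argument along the one-parameter family $M_u$, using that $\det M_u$ is affine in $u$ with unique root $u_0$; this buys you the uniqueness claim essentially for free (positive definite, hence rank $6$, for $u<u_0$; an odd number of negative eigenvalues for $u>u_0$), whereas the paper dismisses uniqueness as ``clear.'' Your rank argument (rank drops by at most one under a rank-one perturbation, and the determinant vanishes) is the same as the paper's, and your explicit kernel vector $v=\mathcal{M}(2)^{-1}e_1$ is a nice addition the paper omits. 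For (iii) you and the paper use the identical idea: the columns $\mathit{1},X,Y$ of $\widehat{\mathcal{M}(2)}$ are linearly independent (the paper uses the leading $3\times 3$ minor $1-u>0$; your rows-$2,3,4$ check works too and does not even need $1-u>0$), so the unique column relation has degree exactly $2$, forcing $q$ to be constant in the definition of recursive generation. One small point of agreement with the paper that you handled correctly by instinct: the matrix determinant lemma yields the critical value $1/R_{11}$, which matches the stated $u_0=\det\mathcal{M}(2)/R_{11}$ only when $R_{11}$ is read as the $(1,1)$ cofactor $\det\mathcal{M}(2)_{\{2,\ldots,6\}}$, exactly as in the paper's Lemma 3.4; your parenthetical ``cofactor/inverse-entry normalization'' is the right reconciliation.

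One caveat: your ``purely structural backup'' for the degree-$2$ check is circular. If $\deg p\le 1$ and $p(X,Y)=0$, there is no a priori reason that $xp$ and $yp$ also annihilate the columns --- that implication is precisely recursive generation, the property you are trying to establish (it does follow from positivity by a Curto--Fialkow structure theorem, but that would need to be cited, not assumed). Drop the backup; your primary inspection argument via rows $2$, $3$, $4$ is complete and suffices.
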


For the proof of Proposition \ref{rankred} we will need to following auxiliary result, which is an easy consequence of the multilinearity of the determinant.

\begin{lemma} \label{lem1} Let $M$ be an $n \times n$ invertible matrix of real numbers, let $E_{11}$ be the rank-one matrix with $(1,1)$-entry equal to $1$ and all other entries are equal to zero, and let $u \in \mathbb{R}$. \ Then $\operatorname{det} (M-u E_{11})=\operatorname{det} M-u \operatorname{det} M_{\{2,3,\cdots,n\}}$, where $M_{\{2,3,\cdots,n\}}$ denotes the $(n-1) \times (n-1)$ compression of $M$ to the last $n-1$ rows and columns. \ In particular, if $u=\frac{\operatorname{det} M}{(M^{-1})_{11}}$, then $\operatorname{det} (M-u E_{11})=0$.   
\end{lemma}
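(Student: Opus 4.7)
The plan is to read the left-hand side $\det(M-uE_{11})$ as a one-parameter family of rank-one perturbations of $M$ at the $(1,1)$ entry, and evaluate it by multilinearity. Writing $M$ in column form as $M=[c_1\mid c_2\mid\cdots\mid c_n]$, the matrix $M-uE_{11}$ has columns $[c_1-u\mathbf{e}_1\mid c_2\mid\cdots\mid c_n]$, where $\mathbf{e}_1=(1,0,\ldots,0)^{T}$. Multilinearity of the determinant in the first column splits
\[
\det(M-uE_{11})=\det[c_1\mid c_2\mid\cdots\mid c_n]-u\det[\mathbf{e}_1\mid c_2\mid\cdots\mid c_n].
\]
The first term is $\det M$. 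For the second, I would Laplace-expand along the first column, which has a single nonzero entry (a $1$ in position $(1,1)$, with sign $(-1)^{1+1}=+1$); the resulting cofactor is exactly $\det M_{\{2,3,\ldots,n\}}$, the compression of $M$ to the last $n-1$ rows and columns. This yields the main identity.

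For the \emph{in particular} clause, I would invoke the classical cofactor formula for the inverse: the $(1,1)$ entry of $M^{-1}$ equals the $(1,1)$ cofactor of $M$ divided by $\det M$, so
\[
(M^{-1})_{11}=\frac{\det M_{\{2,\ldots,n\}}}{\det M},\qquad\text{equivalently,}\qquad \det M_{\{2,\ldots,n\}}=(M^{-1})_{11}\det M.
\]
The specialization then follows by substituting the prescribed value of $u$ into the identity from the previous paragraph and simplifying with the expression above.

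Neither step presents a serious obstacle; the lemma is a routine determinantal identity that one could equivalently derive from the matrix determinant lemma $\det(M+xy^{T})=(\det M)(1+y^{T}M^{-1}x)$ applied to $x=-u\mathbf{e}_1$ and $y=\mathbf{e}_1$. The only care point is tracking the sign in the Laplace expansion, which is trivial here because the nonzero entry sits at position $(1,1)$, where the cofactor sign is $+1$. The real content of this lemma lies not in its proof but in its role in Proposition \ref{rankred}, where it pinpoints the unique scalar shift at the $(1,1)$ entry that drops the rank of $\mathcal{M}(2)$ by exactly one.
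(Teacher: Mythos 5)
Your proof is correct and follows exactly the route the paper indicates: the paper presents this lemma as ``an easy consequence of the multilinearity of the determinant,'' and your expansion in the first column, Laplace expansion of the second term, and use of the cofactor formula $(M^{-1})_{11}=\det M_{\{2,\ldots,n\}}/\det M$ fill in that sketch precisely. No issues.
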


\begin{proof}[Proof of Proposition \ref{rankred}]
(ii) Observe that $6=\rank \mathcal{M}(2) \leq \rank \widehat{\mathcal{M}(2)} +\rank \mathcal{P} =\rank \widehat{\mathcal{M}(2)} +1$, so $\rank \widehat{\mathcal{M}(2)} \geq 5.$ \ Since $\det \widehat{\mathcal{M}(2)}=0$, we have $\rank \widehat{\mathcal{M}(2)}=5$. \newline

(i) Using the Nested Determinant Test starting at the lower right-hand corner of $\widehat{\mathcal{M}(2)}$, we know that $\widehat{\mathcal{M}(2)}$ is positive semidefinite since the nested determinants corresponding to principal minors of size $1$, $2$, $3$, $4$ and $5$ are all positive, and the rank of $\widehat{\mathcal{M}(2)}$ is $5$. \ This also implies that $1-u\geq0$. \ We now claim that $1-u$ is strictly positive. If  $1-u=0$, then the positive semidefiniteness of $\widehat{\mathcal{M}(2)}$ would force all entries in the first row to be zero. \ Since this is evidently false, we conclude that $1-u>0$. \newline

(iii) It is sufficient  to show that the first three columns of $\widehat{\mathcal{M}(2)}$ are linearly independent. \ Consider the third leading principal minor of $\widehat{\mathcal{M}(2)}$, which equals $1-u$, and is therefore positive. \ Thus, there is no linear dependence in this submatrix, and as a result the same holds in $\widehat{\mathcal{M}(2)}$.  

Finally, the uniqueness of $u_0$ as the only value satisfying (i)--(iii) is clear. \end{proof}

\section{\label{Proof} Proof of the Main Result} 

We first observe that by combining Proposition \ref{rankred} with Theorem \ref{quartic}, it suffices to consider the case when $\widehat{\mathcal{M}(2)}$ has a column relation corresponding to a pair of intersecting lines. \ For, in all other cases, there exists a representing measure for $\widehat{\mathcal{M}(2)}$ with exactly five atoms; when combined with the additional atom coming from the matrix $\mathcal{P}$, we see that $\mathcal{M}(2)$ admits a $6$-atomic representing measure.

We thus focus on the case when $\widehat{\mathcal{M}(2)}$ is subordinate to a degenerate hyperbola. \ After applying an additional degree-one transformation, we can assume, as in Proposition \ref{degenerate}, that the column relation $XY=0$ is present in $\widehat{\mathcal{M}(2)}$.  However, we may not continue to assume that the submatrix $\widehat{\mathcal{M}(1)}$ is the identity matrix, since the degree-one transformation that produces the column relation $XY=0$ will, in general, change the low-order moments. \ That is, $\widehat{\mathcal{M}(2)}$ is of the form
$$\widehat{\mathcal{M}(2)}=\left(
\begin{array}{cccccc}
 1 & a & b & c & 0 & d \\
 a & c & 0 & e & 0 & 0 \\
 b & 0 & d & 0 & 0 & f \\
 c & e & 0 & g & 0 & 0 \\
 0 & 0 & 0 & 0 & 0 & 0 \\
 d & 0 & f & 0 & 0 & h \\
\end{array}
\right).
$$
In this case, the original moment matrix $\mathcal{M}(2)$ is written as 
$$\mathcal{M}(2) = \widehat{\mathcal{M}(2)}+u \begin{pmatrix}
1 & p & q& p^2& p\, q & q^2
\end{pmatrix}^T \begin{pmatrix}
1 & p & q& p^2& p\, q & q^2
\end{pmatrix},$$
for some $u>0$ and $p\, q\neq 0$. \ That is, $\mathcal{M}(2)$ is the sum of a moment matrix of rank $5$ with column relation $XY=0$ and a positive scalar multiple of the moment matrix associated with the point mass at $(p,q)$, with $pq \ne 0$. \ Without loss of generality, we can assume that $p=q=1$ (this requires an obvious degree-one transformation, i.e., $\tilde{x}:=\frac{x}{p}, \; \tilde{y}:=\frac{y}{q}$). \ As a result, the form of $\mathcal{M}(2)$ is now as follows:
$$\mathcal{M}(2)=\left(
\begin{array}{cccccc}
 1+u & a+u & b+u & c+u & u & d+u \\
 a+u & c+u & u & e+u & u & u \\
 b+u & u & d+u & u & u & f+u \\
 c+u & e+u & u & g+u & u & u \\
 u & u & u & u & u & u \\
 d+u & u & f+u & u & u & h+u \\
\end{array}
\right)$$
We will show that $\mathcal{M}(2)$ admits a flat extension, and that will readily imply that it admits a $\rank \mathcal{M}(2)$-atomic (that is, 6-atomic) representing measure. \ The $B(3)$-block in an extension $\mathcal{M}(3)$ can be generated by letting $\beta_{41}=\beta_{32}=\beta_{23}=\beta_{14}=u$, so that $B(3)$ can thus be written as
$$ \left(
\begin{array}{cccc}
 e+u & u & u & f+u \\
 g+u & u & u & u \\
 u & u & u & h+u \\
 \beta_{50} & u & u & u \\
 u & u & u & u \\
 u & u & u & \beta_{05} \\
\end{array}
\right).$$
As usual, let $W:= \mathcal{M}(2)^{-1} B(3)$ and let $C(3)\equiv (C_{ij}):=W^{\ast} \mathcal{M}(2) W$. \ Note that if $C(3)$ turns out to be Hankel, then $\mathcal{M}(3)$ is a flat extension of $\mathcal{M}(2)$. \ Since $C(3)$ is symmetric, to ensure that $C(3)$ is Hankel (and therefore $\mathcal{M}(3)$ is a moment matrix) we only need to solve the  following system of equations:
\begin{eqnarray}\label{eq-E}
\begin{cases}
E_1:=C_{13}-C_{22} =0\\
E_2:= C_{14}-C_{23}=0\\
E_3:= C_{24}-C_{33}=0.
\end{cases}
\end{eqnarray}
This is a system of equations involving quadratic polynomials with $2$ unknown variables (the new moments $\beta_{50}$ and $\beta_{05}$). \ A straightforward calculation shows that $E_1=0$, $E_3=0$, and that
\begin{eqnarray*}
E_2 = 0~ \ \iff \!\!\!\!\!\!\!&&(c^2 - a e) (d^2 - b f)
 \beta_{50} \beta_{05}
+(c^2 - a e) (f^3 - 2 d f h + b h^2 - d^2 u + b f u)\beta_{50} \\
&&+(d^2 - b f)  (e^3 - 2 c e g + a g^2 - c^2 u + a e u) \beta_{05} \\
&&+ (e^3 - 2 c e g + a g^2 - c^2 u + a e u) (f^3 - 2 d f h + b h^2 -d^2 u + b f u)=0 \\
\ \iff \!\!\!\!\!\!\!&&\kappa \lambda \beta_{50} \beta_{05}+\kappa \mu \beta_{50}+\lambda \nu \beta_{05}+\nu \mu =0 ,
\end{eqnarray*}
where $\kappa$, $\lambda$, $\mu$ and $\nu$ have the obvious definitions. \ If $\kappa, \lambda \ne 0$, then $\beta_{05}=\frac{-\mu \nu+\kappa \mu \beta_{50}}{\kappa \lambda \beta_{50}+\lambda \nu}$ (for $\beta_{50} \ne -\frac{\nu}{\kappa}$), which readily implies that $E_2=0$ admits infinitely many solutions. \ When $\kappa = 0$ and $\lambda \ne 0$, we see that $E_2=\lambda \nu \beta_{05}+\mu \nu$, from which it follows that a solution always exists (and it is unique when $\nu \ne 0$). \ A similar argument shows that $\kappa \ne 0$ and $\lambda =0$ also yields a solution (which is unique when $\mu \ne 0$). \ We are thus left with the case when both $\kappa \equiv c^2-ae$ and $\lambda \equiv d^2-bf$ are equal to zero. \ Since $c$ and $d$ are in the diagonal of a positive semidefinite matrix, they must be positive. \ Thus, all of $a,$ $b$, $e$, and $f$ are nonzero and we can set $e:=c^2/a$ and $f:=d^2/b$. \ In this case, the moment matrix is
\begin{eqnarray*}
\mathcal{M}(2)=\left(
\begin{array}{cccccc}
 1+u & a+u & b+u & c+u & u & d+u \\
 a+u & c+u & u & \frac{c^2}{a}+u & u & u \\
 b+u & u & d+u & u & u & \frac{d^2}{b}+u \\
 c+u & \frac{c^2}{a}+u & u & g+u & u & u \\
 u & u & u & u & u & u \\
 d+u & u & \frac{d^2}{b}+u & u & u & h+u
\end{array}
\right)
\end{eqnarray*}
Let $k:=\det \mathcal{M}(2)/ \det \mathcal{M}(2)_{\{2,3,4,5,6\}}$. \ As in the proof of Proposition \ref{rankred}, we see that $k=\frac{-b^2 c-a^2 d+c d}{c d}>0$ and the first summand in the following decomposition of $\mathcal{M}(2)$ has rank $5$ and is positive semidefinite (note that the $(1,1)$-entry is $1+u-k$):
$$
\mathcal{M}(2)=\left(
\begin{array}{cccccc}
 \frac{b^2 c+a^2 d+c d u}{c d} & a+u & b+u & c+u & u & d+u \\
 a+u & c+u & u & \frac{c^2+a u}{a} & u & u \\
 b+u & u & d+u & u & u & \frac{d^2+b u}{b} \\
 c+u & \frac{c^2+a u}{a} & u & g+u & u & u \\
 u & u & u & u & u & u \\
 d+u & u & \frac{d^2+b u}{b} & u & u & h+u
\end{array}
\right)+
\left(
\begin{array}{cccccc}
 k & 0 & 0 & 0 & 0 & 0 \\
 0 & 0 & 0 & 0 & 0 & 0 \\
 0 & 0 & 0 & 0 & 0 & 0 \\
 0 & 0 & 0 & 0 & 0 & 0 \\
 0 & 0 & 0 & 0 & 0 & 0 \\
 0 & 0 & 0 & 0 & 0 & 0
\end{array}
\right)
$$
The only column relation in the first summand is
\begin{equation}\label{cr1}
XY=\frac{c d}{-b c-a d+c d}\textit{1} -\frac{a d}{-b c-a d+c d} X -\frac{b c }{-b c-a d+c d}Y=:\xi 1-\eta X - \theta Y.
\end{equation}
Unless $\eta \theta=-\xi$, the conic that represents this column relation is a nondegenerate hyperbola, and therefore the moment sequence associated to the moment matrix has a $5$-atomic measure, by Theorem \ref{quartic}. \ In the case when the conic in (\ref{cr1}) is a pair of intersecting lines (i.e., $(x+\theta)(y+\eta)=0$), we must have $c=a$ or $d=b$. \ 

Thus, the remaining two specific cases to cover are $\mathcal{M}(2)$ with $c=a$ or $d=b$. \ 
Since $\mathcal{M}(2)$ is invertible, for any $B(3)$ block we will be able to find $W$ such that $\mathcal{M}(2)W=B(3)$. \ We propose to use a $B(3)$ block with new moments $\beta_{32}=\beta_{23}=\beta_{14}=0$, and to then extend $\mathcal{M}(2)$ to $\mathcal{M}(3)$ using Smul'jan's Lemma, that is, we will define $C(3):=W^*B(3)$. \ The goal is to establish that $C(3)$ is a Hankel matrix, and that requires verification of (\ref{eq-E}). \ Before we begin our detailed analysis, we need to make a few observations. \ 

Let $d_i$ denote the principal minor of $\mathcal{M}(2)$ for $i=1,\ldots,6$; since $\mathcal{M}(2)$ is positive and invertible, we know that these minors are all positive. \ Then
\begin{eqnarray*}
d_5=-\frac{\left(b^2 c+a^2 d-c d\right) \left(-c^3+a^2 g\right) u}{a^2}  \text{ \ \ and \ \ }
d_6=\frac{ d_5\left(-d^3+b^2 h\right) }{ b^2},
\end{eqnarray*}
which implies
\begin{eqnarray} \label{minor}
\left(b^2 c+a^2 d-c d\right) \left(-c^3+a^2 g\right)<0 \qquad \text{ and }  \qquad -d^3+b^2 h >0 .
\end{eqnarray}
Next, we use \textit{Mathematica} to solve $E_1=0$ for $\beta_{50}$ and $E_3=0$ for $\beta_{05}$, and we obtain
\begin{eqnarray*}
&&\beta_{50}=\frac{1}{a^2 c \left(b^2 c+a^2 d-c d\right) \left(-d^3+b^2 h\right) u}
( \alpha_{11} \beta_{41}^2 + \alpha_{12} \beta_{41} +\alpha_{13} ), \\
&&\beta_{05}=\frac{1}{b^2 d \left(b^2 c+a^2 d-c d\right) \left(c^3-a^2 g\right)} (  \alpha_{21} \beta_{41} +\alpha_{22} ),
\end{eqnarray*}
where the $\alpha_{ij}$'s are polynomials in $a,b,c,d,g,h,$ and $u$.
Since $a,b\neq 0$, $c,d>0$, we can use (\ref{minor}) to show that both $\beta_{50}$ and $\beta_{05}$ above are well defined. \ We now substitute these values in $E_2$ and check that $E_2$ is a quadratic polynomial in $\beta_{41}$; indeed, we can readily show that the leading coefficient of $E_2$ is nonzero if $c=a$ or $d=b$. \ Thus, if the discriminant $\Delta $ of this quadratic polynomial is nonnegative, then (\ref{eq-E}) has at least one solution. \ We are now ready to deal with the two special cases: $c=a$ and $d=b$. \ If $c=a$, then
\begin{eqnarray*}
\Delta=\frac{a^2 u^2 (a-g)^2 \left(-d^3+b^2 h\right)^2 F_1(a,b,d,h) }{b^4 d^2},
\end{eqnarray*}
where $$F_1(a,b,d,h)= (-1+a)^2 b^2 h^2+
2 b^2 d \left(2 b^2-3 d+3 a d\right) h
-d^4 \left(3 b^2-4 d+4 a d\right)$$ is a concave upward quadratic polynomial in $h$. \ Notice that $\Delta \geq 0$ if and only if $F_1 \geq 0$, which means that the discriminant of $F_1$,  $\Delta_1:=
16 b^2 d^2 \left(b^2-d+a d\right)^3$, needs to be zero or negative. \ In this case, we observe that
\begin{eqnarray*}
&&c=a>0,\\
&&d_3= -a b^2+a d-a^2 d+a u-a^2 u-b^2 u+d u-a d u > 0,\\
&&d_4= -d_3 (a-g)>0 \; (\Rightarrow a-g<0),\\
&&d_5= a \left(b^2-d+a d\right) (a-g) u >0,
\end{eqnarray*}
which leads to $b^2-d+a d<0$. \ Therefore, $\Delta_1 <0$ and $\Delta >0$.

Similarly, if $d=b$, then
\begin{eqnarray*}
\Delta=\frac{\left(-c^3+a^2 g\right)^2 (b-h)^2  u^2 F_2(a,b,c,h)}{a^4},
\end{eqnarray*}
where 
\begin{eqnarray*}
F_2(a,b,d,h)&=&(-1+b)^2 h^2 c^2 +
2 a (-1+b) \left(-2 b^3+3 b^2 h+a h^2-b h^2\right) c \\
&&+ a^2 \left(-4 a b^3+b^4+6 a b^2 h-2 b^3 h+a^2 h^2-2 a b h^2+b^2 h^2\right)
\end{eqnarray*}
is a concave upward quadratic polynomial in $c$. \ The discriminant of $F_2$ is $\Delta_2:=16 a^2 (-1+b)^2 b^3 (b-h)^3$; we observe that $d=b>0$ and $d_6=-d_5(b-h)>0$, which leads to $b-h<0$. Therefore, $\Delta_2<0$ and $\Delta >0$, which completes the proof. \qed

\bigskip
\textit{Acknowledgments}. \ The authors are deeply grateful to the referee for many suggestions that led to significant improvements in the presentation. \ Many of the examples, and portions of the proofs of
some results in this paper were obtained using calculations with the
software tool \textit{Mathematica \cite{Wol}}.

\end{document}